\newcommand\E{\mathbb{E}}
\newtheorem{thm}{Theorem}
\newtheorem{lem}[thm]{Lemma}
\newtheorem{cor}[thm]{Corollary}
\title{Extension of a method of Gilmer}
\date{Wednesday 23, November}
\author{Luke Pebody}
\begin{document}
\maketitle
\begin{abstract}
It is a well-known conjecture, sometimes attributed to Frankl, that for any family of sets which is 
closed under the union operation, there is some element which is contained in at least half of the
sets. 

Gilmer~\cite{Gilmer} was the first to prove a constant bound, showing that there is some element
contained in at least 1\% of the sets. They state in their paper that the best possible bound
achievable by the same method is $\frac{3-\sqrt5}2\approx 38.1\%$.

This note achieves that bound by finding the optimum value, given a binary variable $X$ 
potentially depending on some other variable $S$ with a given expected value $E(X)$ and
conditional entropy $H(X|S)$ of the conditional entropy of $H(X_1\cup X_2|S_1,S_2)$ for
independent readings $X_1, S_1$ and $X_2,S_2$.
\end{abstract}
\section{Introduction}
In this note, we prove the following result (strengthening almost as much 
as possible the main result from~\cite{Gilmer}).
\begin{thm}\label{T:main}
Let $A$ and $B$ denote independent samples from a distribution over subsets of $[n]$. Assume that for
all $i\in[n]$, $\Pr[i\in A]\le\alpha$ for some value $0<\alpha\le\frac{3-\sqrt{5}}2$. Then
$H(A\cup B)\ge\frac{H(\alpha^2)}{H(\alpha)}H(A).$
\end{thm}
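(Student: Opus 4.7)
The plan is to reduce the global inequality to a per-coordinate one via the chain rule for entropy, then handle the resulting per-coordinate inequality by the optimisation programme described in the abstract.

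Let $X_i = \mathbf{1}[i\in A]$, $Y_i = \mathbf{1}[i\in B]$ and $Z_i = X_i\vee Y_i$. By the entropy chain rule, $H(A) = \sum_{i=1}^n H(X_i\mid X_{<i})$, and because $Z_{<i}$ is a deterministic function of $(X_{<i},Y_{<i})$,
\[
H(A\cup B) \;=\; \sum_{i=1}^n H(Z_i\mid Z_{<i}) \;\ge\; \sum_{i=1}^n H(Z_i\mid X_{<i},Y_{<i}).
\]
Set $P_i := \Pr[X_i=1\mid X_{<i}]$ and $Q_i := \Pr[Y_i=1\mid Y_{<i}]$. Since $A$ and $B$ are independent and identically distributed, $P_i$ and $Q_i$ are iid random variables in $[0,1]$ with $\E[P_i] = \Pr[i\in A]\le\alpha$; conditionally on $(X_{<i},Y_{<i})$ the variables $X_i,Y_i$ are independent Bernoullis with means $P_i,Q_i$, so
\[
H(X_i\mid X_{<i}) = \E[H(P_i)], \qquad H(Z_i\mid X_{<i},Y_{<i}) = \E[H(1-(1-P_i)(1-Q_i))].
\]
The theorem therefore follows from the per-coordinate key lemma: for iid $P,Q\in[0,1]$ with $\E[P]\le\alpha\le(3-\sqrt 5)/2$,
\[
\E[H(1-(1-P)(1-Q))] \;\ge\; \frac{H(\alpha^2)}{H(\alpha)}\,\E[H(P)].
\]

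To prove the key lemma, I would carry out the variational programme sketched in the abstract: for each admissible pair $(\alpha',\eta)$ with $\alpha'\le\alpha$ and $\eta\in[0,H(\alpha')]$, minimise $\E[H(1-(1-P)(1-Q))]$ over iid distributions of $P$ with $\E[P]=\alpha'$ and $\E[H(P)]=\eta$. Lagrangian analysis on the two linear constraints forces the minimising measure to be supported on a small number of atoms (typically at most three, including possibly the boundary points $0$ and $1$), from which one reads off an explicit minimum $G(\alpha',\eta)$. The lemma then becomes the inequality $G(\alpha',\eta)/\eta \ge H(\alpha^2)/H(\alpha)$ for all admissible $(\alpha',\eta)$. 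The threshold $(3-\sqrt 5)/2$ is characterised by the identity $(1-\alpha)^2 = \alpha$, equivalently $H(1-(1-\alpha)^2)=H(\alpha)$, and this algebraic coincidence is what allows the variational lower bound to match the claimed constant.

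The main obstacle will be the explicit description of the extremizing distribution and the verification of the ratio bound $G(\alpha',\eta)/\eta \ge H(\alpha^2)/H(\alpha)$ throughout the admissible region, with particular attention to the boundary behaviour as $P$ concentrates on $\{0,1\}$ (where both sides of the inequality vanish and a careful Taylor expansion is needed to compare them). Once the per-coordinate lemma is established, summing over $i\in[n]$ and using $\E[P_i]\le\alpha$ for every $i$ completes the proof of the theorem.
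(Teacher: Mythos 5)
Your reduction is exactly the one the paper inherits from Gilmer: the chain rule plus ``conditioning decreases entropy'' turns the theorem into a per-coordinate statement, and your key lemma (for iid $P,Q$ with $\E[P]\le\alpha$, $\E[H(P+Q-PQ)]\ge\frac{H(\alpha^2)}{H(\alpha)}\E[H(P)]$) is precisely the paper's Lemma~\ref{L:main}, up to the cosmetic substitution $w=1-v$ and the harmless replacement of $H(2\alpha-\alpha^2)$ by the smaller $H(\alpha^2)$. That part is fine. The gap is in your proposed proof of the key lemma. The quantity you want to minimise, $\E_{P,Q}[H(1-(1-P)(1-Q))]$, is a \emph{quadratic} form in the law of $P$ (it involves two independent copies of the same measure), so the fact that the feasible set is cut out by two linear constraints does not let you conclude by Lagrangian or extreme-point reasoning that the minimiser is supported on few atoms: that argument is only valid for objectives that are linear in the measure. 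This is exactly the obstacle the paper's Lemma~\ref{L:ace} is built to overcome. It shows that any two non-zero atoms $x_1,x_2$ (in the $w=1-v$ coordinates) can be merged into a single atom $y$ chosen so that $H(y)/y$ is the correct weighted average of $H(x_1)/x_1$ and $H(x_2)/x_2$, preserving both $\E[X]$ and $\E[H(X)]$ and not increasing the quadratic objective; the last of these properties is genuinely delicate and rests on the convexity of $z\mapsto f(\alpha g(z))$ for $f(x)=H(x)/x$ and $g=f^{-1}$ (Lemma~\ref{L:peri}). Your sketch offers no substitute for this step, and ``typically at most three atoms'' is both unproven and weaker than what is needed (the paper reduces to a \emph{single} non-zero atom, which is what makes the extremal value explicitly computable).

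Even granting an atomic minimiser, you would still owe the explicit formula for the minimum and the verification of the ratio bound. In the paper this is Corollary~\ref{C:nyssa} (minimum $t^2H(v^2)/v^2$ with $v=g(u/t)$) followed by two monotonicity facts: that $H(x^2)/H(x)$ is increasing on $[0,1]$ and that $H(x^2)/(xH(x))$ is increasing on $[\frac{\sqrt5-1}{2},1]$; the second of these is the one and only place the threshold enters. You have correctly spotted the algebraic coincidence $(1-\alpha)^2=\alpha$ at $\alpha=\frac{3-\sqrt5}{2}$, but the actual mechanism by which it constrains the argument is this monotonicity window, not a boundary Taylor expansion. In short: right reduction, right target lemma, but the variational step as described would fail, and the missing idea is the pairwise-merging argument driven by the function $H(x)/x$.
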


This will imply the following.
\begin{thm}
Let $\mathcal{F}\subseteq2^{[n]}$ be a union-closed family, $\mathcal{F}\ne\emptyset$. Then there exists 
$i\in[n]$ that is contained in at least a $\frac{3-\sqrt{5}}{2}\approx 38.1\%$ fraction of the sets 
in $\mathcal{F}$.
\end{thm}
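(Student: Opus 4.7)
My plan is to argue by contradiction using Theorem~\ref{T:main}. Suppose for contradiction that $\mathcal F\subseteq 2^{[n]}$ is nonempty and union-closed yet every $i\in[n]$ appears in strictly less than a $\tfrac{3-\sqrt 5}{2}$ fraction of the sets of $\mathcal F$. Draw $A$ and $B$ independently from the uniform distribution on $\mathcal F$; these are identically distributed. Because the maximum of $\Pr[i\in A]$ is taken over finitely many indices, there exists some $\alpha<\tfrac{3-\sqrt 5}{2}$ with $\Pr[i\in A]\le\alpha$ for every $i\in[n]$, which puts $(A,B,\alpha)$ inside the hypothesis of Theorem~\ref{T:main}.

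I would then squeeze $H(A\cup B)$ between two bounds. Union-closure ensures $A\cup B\in\mathcal F$ almost surely, and because the uniform distribution on $\mathcal F$ is the unique maximum-entropy distribution supported on $\mathcal F$,
\[
H(A\cup B)\ \le\ \log|\mathcal F|\ =\ H(A).
\]
Theorem~\ref{T:main} applied with this $\alpha$ supplies the matching lower bound $H(A\cup B)\ge\frac{H(\alpha^2)}{H(\alpha)}H(A)$. Assuming $|\mathcal F|\ge 2$ (otherwise $\mathcal F=\{S\}$: any $i\in S$ has frequency $1$, and $\mathcal F=\{\emptyset\}$ satisfies the conclusion vacuously), $H(A)>0$, so the two bounds combine to force a nontrivial pinning of the ratio $H(\alpha^2)/H(\alpha)$.

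The main obstacle is the algebraic step: deducing from this pinning a genuine contradiction with the assumption $\alpha<\tfrac{3-\sqrt 5}{2}$. The key fact is that $\alpha^*=\tfrac{3-\sqrt 5}{2}$ is the smaller root of $\alpha^2-3\alpha+1=0$, equivalently the fixed-point identity $(1-\alpha^*)^2=\alpha^*$, which is precisely the value at which the estimate in Theorem~\ref{T:main} becomes tight. Once this is in hand, the strict inequality $\alpha<\alpha^*$ should propagate through the entropy ratio to yield the contradiction, and the rest of the argument is routine bookkeeping (continuity/openness of the inequality in $\alpha$, plus the edge-case handling above).
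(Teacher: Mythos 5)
Your skeleton is exactly the paper's proof: argue by contradiction, sample $A,B$ uniformly and independently from $\mathcal F$, use finiteness of $[n]$ to get a single $\alpha<\frac{3-\sqrt5}{2}$ dominating all the $\Pr[i\in A]$, bound $H(A\cup B)\le\log|\mathcal F|=H(A)$ by union-closure and maximality of the uniform entropy, and play that against the lower bound from Theorem~\ref{T:main}. The problem is that the one step you defer as ``routine bookkeeping'' --- showing the entropy ratio strictly exceeds $1$ when $\alpha<\frac{3-\sqrt5}{2}$ --- is the entire remaining content, and as you have set it up it cannot be closed: with the ratio $H(\alpha^2)/H(\alpha)$ read literally, $H$ is increasing on $[0,\tfrac12]$ and $\alpha^2<\alpha<\tfrac12$, so $H(\alpha^2)/H(\alpha)<1$ and your two bounds $\frac{H(\alpha^2)}{H(\alpha)}H(A)\le H(A\cup B)\le H(A)$ are perfectly compatible; no amount of propagating $\alpha<\alpha^*$ produces a contradiction. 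The inequality that actually works is for the ratio $\frac{H(2\alpha-\alpha^2)}{H(\alpha)}$ appearing in Lemma~\ref{L:main} (note $2\alpha-\alpha^2=1-(1-\alpha)^2$ is $\Pr[i\in A\cup B]$ when $\Pr[i\in A]=\Pr[i\in B]=\alpha$ independently; the $H(\alpha^2)$ in the statement of Theorem~\ref{T:main} should be read through the symmetry $H(x)=H(1-x)$ as in Lemma~\ref{L:main2}). There your fixed-point observation does finish the job: for $0<\alpha<\frac{3-\sqrt5}{2}$ one has $\alpha^2-3\alpha+1>0$, i.e.\ $\alpha<2\alpha-\alpha^2<1-\alpha$, and since $H$ is concave with $H(\alpha)=H(1-\alpha)$, every point strictly between $\alpha$ and $1-\alpha$ has strictly larger entropy; hence $H(2\alpha-\alpha^2)>H(\alpha)$, the ratio exceeds $1$, and $H(A\cup B)>H(A)$ contradicts your upper bound (given $H(A)>0$, which your $|\mathcal F|\ge2$ reduction supplies). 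You need to write out this chain; citing the identity $(1-\alpha^*)^2=\alpha^*$ alone does not do it.

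A smaller point: $\mathcal F=\{\emptyset\}$ is not handled ``vacuously'' --- there no $i\in[n]$ lies in any member of $\mathcal F$, so the stated conclusion actually fails. This is the standard degenerate case that formulations of the conjecture silently exclude (the paper's own proof ignores it as well), but you should not claim it is covered when it is the one genuine exception.
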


\begin{proof}
Suppose not. Let $A$ and $B$ be drawn independently and uniformly from $\mathcal{F}$. Then, since
$\Pr[i\in A]$ is rational for each $i$, there exists some $\alpha<\frac{3-\sqrt{5}}2$ for which
$\Pr[i\in A]\le\alpha$ for all $i\in [n]$. Then from Theorem~\ref{T:main} it follows that 
$H(A\cup B)\ge\frac{H(\alpha^2)}{H(\alpha)}H(A)>H(A)$. But this causes a contradiction, as
$A$ is drawn from the uniform distribution on $\mathcal{F}$ and $A\cup B$ is drawn from some other
distribution on $\mathcal{F}$ and the uniform distribution has the maximum entropy.
\end{proof}

\section{Main Lemma}
\begin{lem}\label{L:main}
For $1\le i\le n$, let $p_i, v_i\in[0,1]$ be numbers such that $\sum_i p_i=1$ and 
$\sum_i p_iv_i\le\alpha$ for some $0<\alpha\le\frac{3-\sqrt{5}}2$. 
Then \[\sum_{i,j}p_ip_jH(v_i+v_j-v_iv_j)\ge\frac{H(2\alpha-\alpha^2)}{H(\alpha)}\sum_ip_iH(v_i).\]
\end{lem}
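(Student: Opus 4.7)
I would begin by recasting the inequality in information-theoretic terms. Let $S$ be a random index with $\Pr[S=i]=p_i$ and let $X$ be Bernoulli conditional on $S$ with $\Pr[X=1\mid S=i]=v_i$. Then $\E[X]\le\alpha$, $H(X\mid S)=\sum_ip_iH(v_i)$, and for independent copies $(S_k,X_k)$ we have $H(X_1\vee X_2\mid S_1,S_2)=\sum_{i,j}p_ip_jH(v_i+v_j-v_iv_j)$. The lemma is equivalent to $H(X_1\vee X_2\mid S_1,S_2)\ge cH(X\mid S)$ with $c=H(2\alpha-\alpha^2)/H(\alpha)$. Equality holds at the constant distribution $v_i\equiv\alpha$; at the critical value $\alpha=(3-\sqrt5)/2$ the golden-ratio identity $(1-\alpha)^2=\alpha$ makes $c=1$.

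\textbf{Pointwise plus averaging.} My first attempt would be to establish a sharp two-variable inequality
\[H(v+w-vw)\ge k\bigl[(1-w)H(v)+(1-v)H(w)\bigr],\qquad k=\frac{c}{2(1-\alpha)},\]
valid for all $v,w\in[0,1]$. Averaging against $p_ip_j$ and using $\bar v=\sum_ip_iv_i\le\alpha$ gives $\sum_{i,j}p_ip_jH(v_i\vee v_j)\ge 2k(1-\bar v)\sum_ip_iH(v_i)\ge 2k(1-\alpha)\sum_ip_iH(v_i)=c\sum_ip_iH(v_i)$, which is the lemma. The two-variable inequality is tight at $v=w=\alpha$ and identically on the lines $v=1$ or $w=1$; verification would proceed by fixing $w$, studying $g_w(v)=H(v+w-vw)-k(1-w)H(v)-k(1-v)H(w)$ on $[0,1]$, checking $g_w(1)=0$ and $g_w(0)=(1-k)H(w)\ge0$, and using the concavity of $H$ to bound intermediate values. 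The critical algebraic fact is that the identity $\alpha^2-3\alpha+1=0$ at the critical $\alpha$ forces $g_w'(\alpha)=0$ along the diagonal, making $\alpha$ a double root of $\phi(v)=g_v(v)=H(2v-v^2)-2k(1-v)H(v)$.

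\textbf{Main obstacle.} The main obstacle is that for $\alpha$ strictly smaller than $(3-\sqrt5)/2$ the proposed pointwise bound apparently fails at some pairs $(v,w)$ (numerically, already at $\alpha\approx0.1$ it fails near $v=w\approx0.2$), so the plan above only closes the proof cleanly at the critical value. For general $\alpha\le(3-\sqrt5)/2$ one must use the constraint $\bar v\le\alpha$ more delicately. Options include: (i) augmenting the pointwise bound with a linear correction term that absorbs the slack $\alpha-\bar v$; (ii) reducing to two-point distributions by a smoothing/KKT analysis of the functional $(p,v)\mapsto\sum p_ip_jH(v_i\vee v_j)-c\sum p_iH(v_i)$ on $\{\sum p_iv_i\le\alpha\}$, verifying the inequality directly on each two-atom extremizer; or (iii) using the chain-rule identity $H(X_1\vee X_2\mid S_1,S_2)=(2-\bar v)H(X\mid S)-H(X_1\mid X_1\vee X_2,S_1,S_2)$ to recast the problem as an upper bound $H(X_1\mid X_1\vee X_2,S_1,S_2)\le(2-\bar v-c)H(X\mid S)$ on a conditional entropy that is more directly amenable to a concavity/tangent-line argument. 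The hardest step in any of these routes is the global verification of non-negativity away from the extremizer $v_i\equiv\alpha$.
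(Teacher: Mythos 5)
Your proposal does not close the proof, and you say so yourself: the whole argument rests on the pointwise inequality $H(v+w-vw)\ge k\bigl[(1-w)H(v)+(1-v)H(w)\bigr]$ with $k=c/(2(1-\alpha))$, and you concede that this is false for $\alpha$ strictly below $\frac{3-\sqrt5}{2}$. You are right that it fails: at $\alpha=0.1$ one needs $c=H(0.19)/H(0.1)\approx1.496$, hence $k\approx0.831$, and already at $v=w=\tfrac12$ the left side is $H(3/4)\approx0.811$ while the right side is $k\approx0.831$. Since the lemma demands the constant $H(2\alpha-\alpha^2)/H(\alpha)=H((1-\alpha)^2)/H(1-\alpha)$, which equals $1$ at the critical $\alpha$ and strictly increases as $\alpha$ decreases, proving only the critical case does not imply the general statement, and your averaging step gives exactly $2k(1-\bar v)\ge c$ with nothing to spare. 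The three fallback options are not carried out; option (ii), reduction to few-atom extremisers, is in substance what a correct proof must do, but executing it is the entire difficulty. Moreover, even at the critical value your two-variable inequality (which under $w_i=1-v_i$ becomes $H(w_1w_2)\ge\frac{1+\sqrt5}{4}\bigl(w_1H(w_2)+w_2H(w_1)\bigr)$) is only asserted, not proved: the plan ``check the endpoints and use concavity of $H$'' cannot work for an inequality that is within about $10^{-3}$ of equality along essentially the whole diagonal segment $v=w\in[0.35,0.5]$; a tangent-line or concavity bound that coarse would not distinguish this true inequality from the false one you would need for sub-critical $\alpha$.

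For contrast, the paper avoids any pointwise two-variable bound. After passing to $w_i=1-v_i$ (Lemma~\ref{L:main2}), it sets $f(x)=H(x)/x$ with inverse $g$, proves that $x\mapsto f(zg(x))$ is convex for each fixed $z$ (Lemma~\ref{L:peri}), and uses this to merge any two non-zero atoms $x_1,x_2$ of the distribution into a single atom $y$ plus mass at $0$, preserving $\E[X]$ and $\E[H(X)]$ while not increasing $\E[H(X_1X_2)]$ (Lemma~\ref{L:ace}). Iterating reduces the problem to distributions supported on $\{0,v\}$, where $\E[H(X_1X_2)]$ is computed exactly (Corollary~\ref{C:nyssa}), and the lemma then follows from the monotonicity of $H(x^2)/H(x)$ and of $H(x^2)/(xH(x))$ on $[\frac{\sqrt5-1}{2},1]$ --- the latter being the only point where the critical threshold enters. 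If you pursue your route, you must implement your option (ii) in full, which essentially means reconstructing that reduction; the averaging shortcut cannot recover the constant the lemma claims.
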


\begin{proof}[Proof of Theorem~\ref{T:main} from Lemma~\ref{L:main}]
The proof follows exactly as the Proof of Theorem 1 from Lemma 1 in~\cite{Gilmer}.
\end{proof}

For ease of writing in the remainder, we use the fact that $H(1-x)=H(x)$ to rewrite Lemma~\ref{L:main}
in terms of $w_i=1-v_i$.

\begin{lem}[Lemma~\ref{L:main} Rewritten]\label{L:main2}
For $1\le i\le n$, let $p_i, w_i\in[0,1]$ be numbers such that $\sum_i p_i=1$ and 
$\sum_i p_iw_i\ge\beta$ for some $1>\beta\ge\frac{\sqrt{5}-1}2$. 
Then \[\sum_{i,j}p_ip_jH(w_iw_j)\ge\frac{H(\beta^2)}{H(\beta)}\sum_ip_iH(w_i).\]
\end{lem}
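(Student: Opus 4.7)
Set $c = H(\beta^2)/H(\beta)$ and view the lemma probabilistically: writing $W$ for the random variable with $\Pr[W=w_i]=p_i$, and $W_1,W_2$ for i.i.d.\ copies, the claim is $\E[H(W_1W_2)]\ge c\,\E[H(W)]$ whenever $\E[W]\ge\beta$. Equality holds for both the Dirac mass $W\equiv\beta$ (by the definition of $c$) and the Bernoulli distribution $W\in\{0,1\}$ with $\Pr[W=1]=\beta$ (both sides vanishing). The plan is to prove that these are the global minimizers of $F(\mu):=\E[H(W_1W_2)]-c\,\E[H(W)]$ on probability measures with $\E[W]\ge\beta$.

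I would begin with the natural ``one-variable witness'' approach: find $g:[0,1]\to\R$ with $H(xy)\ge g(x)+g(y)$ pointwise and $g(w)\ge(c/2)H(w)+\lambda(w-\beta)$ for some $\lambda\ge 0$; summing against $p_ip_j$ and using $\E[W]\ge\beta$ would then yield the lemma. This is doomed, however: the pointwise bound at $(1,1)$ forces $g(1)\le 0$, the linear lower bound at $w=1$ requires $g(1)\ge\lambda(1-\beta)$, hence $\lambda=0$; but then the pointwise bound at $(1/2,1/2)$ reduces to $H(1/4)\ge c\,H(1/2)$, which fails numerically for any $c\ge 1$. The mean constraint must enter in an essentially nonlinear way. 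My plan is instead to substitute $V_i=1-W_i$, apply the chain-rule identity
\[
H(V_i\vee V_j) \;=\; H(v_i) + (1-v_i)H(v_j) - (v_i+v_j-v_iv_j)\,H\!\left(\frac{v_i}{v_i+v_j-v_iv_j}\right),
\]
and sum against $p_ip_j$ to rewrite the lemma equivalently as the ``interaction-entropy'' bound
\[
R\;:=\;\sum_{i,j}p_ip_j(v_i+v_j-v_iv_j)\,H\!\left(\frac{v_i}{v_i+v_j-v_iv_j}\right)\;\le\;(2-\bar v-c)\sum_ip_iH(v_i),
\]
where $\bar v=\sum p_iv_i\le 1-\beta$. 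The candidate extremal $v_i\equiv 1-\beta$ already saturates this: using $(1-\beta)(1+\beta)=\beta$ and $H((1-\beta)/\beta)=H(\beta)$, one checks $R=\beta H(\beta)=(2-\bar v-c)\sum p_iH(v_i)$ in the golden case.

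The main obstacle is proving the $R$-bound in general. The quadratic form underlying $F$ is not positive semi-definite on signed measures (for example $H(a^2)-2H(ab)+H(b^2)<0$ at $a=0.3$, $b=0.7$), so Lagrangian duality with the single linear mean constraint cannot close the gap. I would argue instead by a reduction: show via a perturbation/splitting argument that any minimizer of the constrained problem has support of size at most two (intuitively, any spreading of mass across three or more support points can be locally undone while preserving the probability and mean constraints, and $F$ is non-increasing under such moves). The two-point case then reduces to a one-parameter calculus inequality, which the golden identities $\beta^2=1-\beta$, $H'(\beta^2)=-H'(\beta)=-\ln\beta$, and $H(\beta)=-\beta(2\beta+1)\ln\beta$ render tractable, with equality witnessed precisely at $v\equiv 1-\beta$ and $v\in\{0,1\}$; the case $\beta>(\sqrt5-1)/2$ then follows by monotonicity in $\beta$.
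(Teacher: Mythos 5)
Your reformulation is sound: the chain-rule identity for $H(X\vee Y)$ is correct, summing it against $p_ip_j$ does turn the lemma into the bound $R\le(2-\bar v-c)\sum_ip_iH(v_i)$, and your equality checks at $v\equiv 1-\beta$ and $v\in\{0,1\}$ (using $\beta^2=1-\beta$, hence $c=1$ at the golden ratio) are right, as is your diagnosis that a pointwise witness $g$ cannot exist and that the quadratic form is indefinite. But the proposal stops exactly where the difficulty starts. The entire content of the lemma is the claim that the constrained minimizer has support of size at most two, and you give no mechanism for it: ``any spreading of mass across three or more support points can be locally undone while $F$ is non-increasing'' is an assertion, not an argument, and since you have already observed that the kernel $H(ab)$ is indefinite, no soft convexity or Lagrangian first-order argument will deliver it. The paper's proof lives entirely in this step: it merges two atoms $x_1,x_2$ into a single atom $y$ plus mass at $0$, chosen so as to preserve both $\E(X)$ and $\E(H(X))$ (not merely the mean), and shows $\E(H(X_1X_2))$ does not increase; the engine is the convexity of $x\mapsto f(\alpha g(x))$ where $f(x)=H(x)/x$ and $g=f^{-1}$ (Lemmas~\ref{L:peri} and~\ref{L:ace}). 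Nothing in your plan substitutes for that analytic input.

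Two further problems. First, even granting the support-two reduction, the two-point case with the mean constraint active is a two-parameter family ($v_1,v_2,p_1$ minus one constraint), not ``a one-parameter calculus inequality'', and you do not indicate how to close it. Second, and more seriously, the final step is invalid: the constant $c=H(\beta^2)/H(\beta)$ equals $1$ at $\beta=\frac{\sqrt5-1}2$ and is strictly greater than $1$ for larger $\beta$ (Lemma~\ref{L:turlough}), so proving the inequality at the golden ratio and invoking ``monotonicity in $\beta$'' yields only the constant $1$ for larger $\beta$, which is strictly weaker than the lemma asserts. You would have to run the whole two-point analysis for every $\beta$ in the range, at which point the golden-ratio identities you lean on are unavailable. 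The paper avoids this by deriving a closed form for the minimum, $t^2H(v^2)/v^2$ with $v=g(u/t)$ (Corollary~\ref{C:nyssa}), and then comparing it to $\frac{H(\beta^2)}{H(\beta)}u$ uniformly in $\beta$ via the two monotonicity lemmas~\ref{L:turlough} and~\ref{L:adric}.
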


\section{Some monotonic functions}
\begin{lem}\label{L:turlough}
The function $\frac{H(x^2)}{H(x)}$ is increasing on the range $[0,1]$.
\end{lem}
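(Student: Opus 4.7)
The plan is to reduce $f'(x)\ge 0$ for $f(x)=H(x^2)/H(x)$ to an explicit inequality and verify it with elementary estimates. Since $H(x)^2>0$ on $(0,1)$, the sign of $f'(x)$ agrees with the sign of
\[
N(x) := 2x\, H(x)\, H'(x^2) - H(x^2)\, H'(x).
\]
The endpoints $x=0$ and $x=1$ are handled by continuous extension: the limiting values are $0$ and $2$ respectively.

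I would next introduce the shorthands $a=-\log x$, $b=-\log(1-x)$, $c=\log(1+x)$ (all nonnegative on $(0,1)$), so that $H(x)=xa+(1-x)b$, $H'(x)=a-b$, $H(x^2)=2x^2 a+(1-x^2)(b-c)$, and $H'(x^2)=2a-b+c$. Substituting and collecting like terms in $\{a^2,ab,ac,b^2,bc\}$ yields, after cancellation,
\[
N(x) = a\,M(x) + (1-x)^2\, b\,(b-c),
\]
where $M(x) = -2x^2\log x + (1-x)(1-3x)\log(1-x) + (1+x^2)\log(1+x)$. Since $b-c=-\log(1-x^2)\ge 0$, the second summand of $N$ is manifestly nonnegative, and the lemma reduces to showing $M(x)\ge 0$ on $(0,1)$.

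On $x\in[1/3,1)$, each of the three terms of $M$ is separately nonnegative: $-2x^2\log x\ge 0$, the middle term is a product of two nonpositive factors (as $1-3x\le 0$ and $\log(1-x)\le 0$), and $(1+x^2)\log(1+x)\ge 0$. On $x\in(0,1/3)$ the middle term is negative, so I would bound it from below using $-\log(1-x)\le x/(1-x)$ to obtain $(1-x)(1-3x)\log(1-x)\ge -x(1-3x)$, and bound the third term from below using $\log(1+x)\ge x-x^2/2$. Adding, the sum of the second and third terms of $M$ is at least $(5/2)x^2+x^3-x^4/2 = x^2\bigl(5/2+x-x^2/2\bigr)$, which is strictly positive on $(0,1]$; combined with the nonnegative first term this gives $M(x)\ge 0$.

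The main obstacle is the interval $x\in(0,1/3)$, where two of the three terms of $M$ have opposite signs and must be compared quantitatively; once one applies the standard one-sided estimates $-\log(1-x)\le x/(1-x)$ and $\log(1+x)\ge x-x^2/2$, the resulting polynomial bound is clean and easy to verify, so the proof is essentially a careful book-keeping computation.
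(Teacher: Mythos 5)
Your proof is correct, and it is worth noting that the paper itself offers no argument here: its proof of Lemma~\ref{L:turlough} reads only ``To be proven,'' so your write-up fills a genuine gap rather than duplicating or diverging from an existing route. I checked the algebra: with $a=-\log x$, $b=-\log(1-x)$, $c=\log(1+x)$, expanding $N(x)=2xH(x)H'(x^2)-H(x^2)H'(x)$ gives coefficients $2x^2$ on $a^2$, $(3x-1)(1-x)$ on $ab$, $(1+x^2)$ on $ac$, $(1-x)^2$ on $b^2$ and $-(1-x)^2$ on $bc$, which is exactly your claimed decomposition $N=aM+(1-x)^2b(b-c)$; since $b-c=-\log(1-x^2)\ge0$, everything reduces to $M\ge0$ as you say. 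The sign split at $x=1/3$ is right, the bounds $-\log(1-x)\le x/(1-x)$ and $\log(1+x)\ge x-x^2/2$ are both valid on the relevant ranges, and the resulting polynomial $x^2(5/2+x-x^2/2)$ is indeed positive on $(0,1]$, so $M(x)\ge0$ throughout and $f'\ge0$ on $(0,1)$. The endpoint limits $0$ and $2$ are also correct, and monotonicity on the open interval is all that the application in the proof of Lemma~\ref{L:main2} requires (there the lemma is invoked only to pass from $t\ge\beta$ to $H(t^2)/H(t)\ge H(\beta^2)/H(\beta)$ with $\beta<1$). In short: a complete, elementary, and verifiable proof of a statement the paper asserts without proof.
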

\begin{proof}
To be proven.
\end{proof}

\begin{lem}\label{L:adric}
The function $\frac{H(x^2)}{xH(x)}$ is increasing on the range
$[\frac{\sqrt{5}-1}2,1]$.
\end{lem}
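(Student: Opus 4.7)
My plan is to pass to the logarithmic derivative. Let $f(x) := H(x^2)/(xH(x))$ and $\phi := (\sqrt{5}-1)/2$. Since ratios of binary entropies are insensitive to the logarithm base, I will work throughout with the natural-log version $H(y) = -y\ln y - (1-y)\ln(1-y)$, so that $H'(y) = \ln((1-y)/y)$ and $H''(y) = -1/(y(1-y))$. Logarithmic differentiation gives
\[\frac{f'(x)}{f(x)} = \frac{2xH'(x^2)}{H(x^2)} - \frac{H'(x)}{H(x)} - \frac{1}{x}.\]
Since $f$, $xH(x)$, and $H(x^2)$ are positive on $(\phi,1)$, the monotonicity claim reduces to showing
\[N(x) := 2x^2 H(x) H'(x^2) - xH'(x) H(x^2) - H(x) H(x^2) \ge 0 \quad \text{on } [\phi,1].\]
Equivalently, introducing the elasticity $E(y) := yH'(y)/H(y)$, the inequality reads $2E(x^2) - E(x) \ge 1$.

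The first step is to verify $N(\phi) = 0$. The golden-ratio identity $\phi^2 = 1-\phi$ yields $H(\phi^2) = H(\phi)$, $H'(\phi) = \ln\phi$, $H'(\phi^2) = -\ln\phi$, and $H(\phi) = -\phi(1+2\phi)\ln\phi = -\phi\sqrt{5}\ln\phi$ (using $1+2\phi=\sqrt{5}$). These give $E(\phi) = -1/\sqrt{5}$ and $E(\phi^2) = \phi/\sqrt{5}$, and hence $(2\phi+1)/\sqrt{5} = 1$. In particular $f'(\phi) = 0$, consistent with $\phi$ being the exact left endpoint of the monotonicity interval.

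To promote $N(\phi) = 0$ to $N \ge 0$ on $(\phi, 1]$, I plan to establish $N'(x) \ge 0$ throughout $[\phi,1]$. A direct computation gives
\[N'(x) = 2xH(x)H'(x^2) - \frac{4xH(x)}{1-x^2} - 2H'(x)H(x^2) + \frac{H(x^2)}{1-x}.\]
Evaluating at $\phi$ and using $1/\phi^2 = \phi+2$ gives $N'(\phi) = H(\phi)\bigl[\phi - 2 - 2\ln\phi/\phi\bigr]$, which is strictly positive: equivalently $2\ln\phi < \phi^2 - 2\phi = 1-3\phi$, and this follows from the standard bound $\ln\phi < 2(\phi-1)/(\phi+1) = 2-4\phi$ together with $\phi > 3/5$. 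As $x \to 1^-$, the term $H(x^2)/(1-x)$ dominates and $N'(x) \to +\infty$.

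The principal obstacle is to rule out sign changes of $N'$ on the interior of $[\phi,1]$. My approach is to split at $x = 1/\sqrt{2}$, the point where $H'(x^2) = 0$: on $[\phi,1/\sqrt{2}]$ one has $H'(x^2)\ge 0$ while $H'(x) < 0$, and on $[1/\sqrt{2},1]$ both $H'(x^2)$ and $H'(x)$ are nonpositive. In each subinterval I group the four summands of $N'$ into pairs of opposite sign and bound the resulting ratios, using Lemma~\ref{L:turlough} (monotonicity of $H(x^2)/H(x)$) to control the cross-ratios of entropies. Should this grouping not close the argument directly, the fallback is to differentiate once more and verify that any candidate zero of $N'$ interior to $[\phi,1]$ contradicts the endpoint data $N'(\phi) > 0$ and $N'(x) \to +\infty$.
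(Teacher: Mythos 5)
The paper offers no proof of this lemma to compare against (its proof environment reads only ``To be proven''), so your attempt must be judged on its own. The reduction to $N(x):=2x^2H(x)H'(x^2)-xH'(x)H(x^2)-H(x)H(x^2)\ge0$, the equivalent form $2E(x^2)-E(x)\ge1$, and the endpoint computations $N(\phi)=0$ and $N'(\phi)>0$ all check out (and the identity $N(\phi)=0$ via $\phi^2=1-\phi$ is a nice observation explaining why $\frac{\sqrt5-1}{2}$ is the sharp threshold). But the argument is not finished --- the paragraph addressing the ``principal obstacle'' is a plan with a fallback, not a proof --- and, more seriously, the route it describes cannot be made to work. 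The claim that $H(x^2)/(1-x)$ dominates as $x\to1^-$ and forces $N'(x)\to+\infty$ is false: the term $-4xH(x)/(1-x^2)$ cancels that divergence at leading order. Writing $\epsilon=1-x$, one finds $-\frac{4xH(x)}{1-x^2}=2\ln\epsilon-2+o(1)$ and $\frac{H(x^2)}{1-x}=-2\ln\epsilon+2-2\ln2+o(1)$, while the remaining two terms of $N'$ are $O(\epsilon(\ln\epsilon)^2)$, so in fact $N'(x)\to-2\ln2<0$; numerically, $N'(0.99)\approx-0.91$. This failure is structural rather than a matter of sharper estimates: since $N(\phi)=0$, $N(x)>0$ on the interior of the interval, and $N(x)\to0$ as $x\to1^-$ (every summand of $N$ carries a factor of $H(x)$ or $H(x^2)$ that vanishes faster than the accompanying logarithm diverges), the function $N$ must decrease somewhere in $(\phi,1)$, so no proof of $N'\ge0$ on all of $[\phi,1]$ can exist.

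To salvage the approach you need a different mechanism near the right endpoint. One natural option is to work with the undivided quantity $2E(x^2)-E(x)-1=N(x)/(H(x)H(x^2))$, which, unlike $N$, does not return to zero at $x=1$ but tends to $+\infty$ (the denominator vanishes faster than $N$), so a monotonicity or convexity argument for it is not doomed from the start; alternatively, split $[\phi,1]$ and treat a neighbourhood of $1$ by direct asymptotics of the elasticity $E$. As it stands, the lemma remains unproven.
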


\begin{proof}
To be proven.
\end{proof}

\section{Properties of $H(x)/x$}
Our proof depends on various properties of $\frac{H(x)}{x}$ (and its inverse). For ease of notation,
let us write $f(x)=\frac{H(x)}x$.

\begin{lem}\label{L:kamelion}
The function $f$ is continuous, onto and decreasing from $(0,1]$ to $(0,\infty)$.
\end{lem}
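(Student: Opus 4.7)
The plan is straightforward calculus. First I would use the standard formula $H(x)=-x\log_2 x-(1-x)\log_2(1-x)$ to compute $H'(x)=\log_2\!\bigl((1-x)/x\bigr)$. Then by the quotient rule $f'(x)=(xH'(x)-H(x))/x^2$, and the numerator telescopes: the two $x\log_2 x$ contributions cancel, leaving $(x+(1-x))\log_2(1-x)=\log_2(1-x)$. Hence $f'(x)=\log_2(1-x)/x^2<0$ on $(0,1)$, so $f$ is strictly decreasing.

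Continuity on $(0,1]$ is immediate: $H$ is continuous on $[0,1]$ and the denominator is bounded away from $0$. For endpoint behaviour I would verify $f(1)=0$ directly, and $\lim_{x\to 0^+}f(x)=+\infty$ by rewriting $f(x)=-\log_2 x-\tfrac{1-x}{x}\log_2(1-x)$: the first term blows up while the second converges to the finite limit $1/\ln 2$. Surjectivity then follows at once from the Intermediate Value Theorem applied to the continuous strictly decreasing $f$.

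I do not anticipate any real obstacle; the argument is one-variable calculus end-to-end. The only step that is not completely mechanical is recognising the telescoping which collapses $xH'(x)-H(x)$ to $\log_2(1-x)$, and this is a one-line substitution. A minor caveat worth flagging: $f(1)=0$ is not in the open interval $(0,\infty)$ as stated in the lemma, so the intended codomain is presumably $[0,\infty)$ (or the intended domain is $(0,1)$); the plan above establishes surjectivity under either reading.
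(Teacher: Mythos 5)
Your proposal is correct and matches the paper's own argument: both compute $f'(x)=\log(1-x)/x^2<0$ to get monotonicity, check $f(1)=0$, and show $f(x)\to\infty$ as $x\to 0^+$ (the paper via the bound $H(x)/x>-\log x$, you via an explicit decomposition). Your caveat about the codomain is apt — the paper itself later uses $g(y)$ for all non-negative $y$, so the intended range is indeed $[0,\infty)$.
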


\begin{proof}
The derivative of $f$ is $f'(x)=\frac{\log(1-x)}{x^2}$, which is negative on the region.
Clearly $\frac{H(1)}{1}=0$ and as $x\to 0$, $\frac{H(x)}{x}>-\log x$ tends to infinity.
\end{proof}

It follows that for all non-negative real $y$, there is a unique value $g(y)$ for which $f(g(y))=y$.
By standard properties of derivatives, if $g(y)=x$, $g$ is differentiable at $y$, with derivative 
$1/f'(x).$

\begin{lem}\label{L:peri}
For all $0<\alpha<1$, the function $f(\alpha g(x))$ is convex.
\end{lem}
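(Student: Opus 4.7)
\emph{Plan.} I will set $h(x) := f(\alpha g(x))$ and aim to show $h'$ is nondecreasing in $x$ by computing it explicitly. The strategy is to reduce convexity of $h$ to monotonicity of an elementary one-variable function, via the substitution $u = g(x)$.

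First I would apply the chain rule together with the identity $g'(x) = 1/f'(g(x))$ (noted just before the statement) and the formula $f'(x) = \log(1-x)/x^2$ from the proof of Lemma~\ref{L:kamelion}. Setting $u := g(x)$ and using that the two factors of $u^2$ in $f'(\alpha u)$ and $f'(u)$ cancel, a short computation gives
\[
h'(x) \;=\; \alpha\cdot \frac{f'(\alpha u)}{f'(u)} \;=\; \frac{1}{\alpha}\cdot\frac{\log(1-\alpha u)}{\log(1-u)}.
\]

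Next, since $g$ is strictly decreasing on its domain (Lemma~\ref{L:kamelion}), $h'$ is nondecreasing in $x$ if and only if $\phi(u):=\log(1-\alpha u)/\log(1-u)$ is nonincreasing in $u$ on $(0,1)$. After clearing the positive denominator $(1-u)(1-\alpha u)\log^2(1-u)$, the sign of $\phi'(u)$ equals the sign of
\[
L(u) \;:=\; (1-\alpha u)\log(1-\alpha u) \;-\; \alpha(1-u)\log(1-u).
\]
I would then observe that $L(0)=0$ and that, upon differentiating, the two constant terms produced by the product rule cancel, leaving the clean expression $L'(u) = \alpha\log\bigl((1-u)/(1-\alpha u)\bigr)$. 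Since $1-u < 1-\alpha u$ for $u,\alpha\in(0,1)$, this is negative, so $L\le 0$ on $[0,1]$ and hence $\phi' \le 0$, finishing the proof.

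The argument has no real conceptual obstacle; the only step requiring a mild observation is spotting that the substitution $u=g(x)$ collapses $h'$ to a ratio of logarithms, and that the derivative of $L$ telescopes to a single signed logarithm. Once these two small pieces of algebra are in place, monotonicity of $\phi$ (and hence convexity of $h$) is immediate, and the argument works for every $\alpha\in(0,1)$ without needing any additional hypothesis on $\alpha$.
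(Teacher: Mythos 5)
Your proof is correct, and it follows the paper's route exactly up to the decisive reduction: the same chain-rule computation gives $h'(x)=\frac1\alpha\,\log(1-\alpha u)/\log(1-u)$ with $u=g(x)$, and both arguments then use the monotonicity of $g$ to reduce convexity of $h$ to showing that $\phi(u)=\log(1-\alpha u)/\log(1-u)$ is nonincreasing on $(0,1)$, whose derivative (after clearing the positive denominator) has the sign of $L(u)=(1-\alpha u)\log(1-\alpha u)-\alpha(1-u)\log(1-u)$. You part ways only in the final step. The paper divides $L$ by $\alpha u$ so as to recognise it as a difference of values of the auxiliary function $z\mapsto-(1-z)\log(1-z)/z$ at $z=\alpha u$ and $z=u$, and then proves that this function is decreasing via a quotient-derivative criterion together with the inequality $\log(1-z)\le-z$. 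You instead dispatch $L$ directly: $L(0)=0$ and $L'(u)=\alpha\log\bigl((1-u)/(1-\alpha u)\bigr)<0$ since $1-u<1-\alpha u$, so $L<0$ on $(0,1)$. Your finish is shorter and entirely self-contained, avoiding the auxiliary monotonicity lemma; the paper's detour buys nothing extra here. Both arguments are valid for every $\alpha\in(0,1)$ with no further hypothesis.
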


\begin{proof}
The derivative of $f(\alpha g(x))$ with respect to $x$ is 
\begin{align*}
f'(\alpha g(x))\alpha g'(x)&=\frac{\log(1-\alpha g(x))}{\alpha^2g(x)^2}\frac{\alpha}{f'(g(x))}\\
&=\frac{\log(1-\alpha g(x))}{\alpha^2g(x)^2}\frac{\alpha g(x)^2}{\log(1-g(x))}\\
&=\frac{\log(1-\alpha g(x))}{\alpha\log(1-g(x))}.
\end{align*}

We will show this is increasing in $x$. Since $g(x)$ is decreasing, this is the same as showing 
that $\frac{\log(1-\alpha y)}{\alpha\log(1-y)}$ is decreasing on the range $(0,1]$.

The derivative of this with respect to $y$ is
\begin{align*}
\frac{d}{dy}
&=\frac{-\alpha/(1-\alpha y)\alpha\log(1-y)+\alpha/(1-y)\log(1-\alpha y)}{\alpha^2\log(1-y)^2}\\
&=\frac{-\alpha(1-y)\log(1-y)+(1-\alpha y)\log(1-\alpha y)}{\alpha\log(1-y)^2(1-y)(1-\alpha y)^2}\\
&=\frac{-(1-y)\log(1-y)/y+(1-\alpha y)\log(1-\alpha y)/(\alpha y)}{\log(1-y)^2(1-y)(1-\alpha y)^2/y}.
\end{align*}

So showing that $f(\alpha g(x))$ is convex for all $0<\alpha<1$ is the same as showing that
$-(1-\alpha y)\log(1-\alpha y)/(\alpha y)>-(1-y)\log(1-y)/y$ for all $0<\alpha<1$, which is the
same as showing that $-(1-z)\log(1-z)/z$ is decreasing on $[0,1]$. 

An elementary property of the derivative of quotients is that if $g'(x)>0$ and $g(x)>0$, 
$f(x)/g(x)$ is decreasing whenever $f(x)/g(x)\ge f'(x)/g'(x)$. Thus $-(1-z)\log(1-z)/z$ is 
decreasing if $-(1-z)\log(1-z)/z\ge1+\log(1-z)$, which can be rewritten as 
$\log(1-z)\le -z$, or $1-z\le\exp(-z)$. Since this is true for all $z$, we are done.
\end{proof}

\section{Optimising Joint Entropy}
In this section we will show that for any distribution $p$ on $[0,1]$, there exists a
distribution $q$ on $[0,1]$ with at most one non-zero point in its support, for which
$\E_q(X)=\E_p(X)$, $\E_q(H(X))=\E_p(H(X))$ and $\E_{q,q}(H(X_1X_2))\le\E_{p,p}(H(X_1X_2)$.

We will do this by proving first for distributions with finite support, inductively by reducing 
the size of the non-zero support by one at a time.

This will be achieved by the following technical lemma.
\begin{lem}\label{L:ace}
Given numbers $p_1,p_2,x_1,x_2\in(0,1]$, there exists numbers $q$ and $y$ such that
$p_1x_1+p_2x_2=qy$ and $p_1H(x_1)+p_2H(x_2)=qH(y)$.

For such $q$ and $y$, the following properties hold:
\begin{enumerate}
\item $q\le p_1+p_2$,
\item For all $0\le z\le 1$, $p_1H(zx_1)+p_2H(zx_2)\ge qH(zy)$ and
\item $p_1^2H(x_1^2)+2p_1p_2H(x_1x_2)+p_2^2H(x_2^2)\ge q^2H(y)^2$.
\end{enumerate}
\end{lem}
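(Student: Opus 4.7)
The setup is to put $S=p_1x_1+p_2x_2$ and $T=p_1H(x_1)+p_2H(x_2)$, so the two defining equations for $q$ and $y$ rearrange to $f(y)=T/S$ together with $q=S/y$. Existence and uniqueness of $y\in(0,1]$ (and hence of $q$) follow from Lemma~\ref{L:kamelion}, once one checks that $T/S$ lies in the range of $f$, which is immediate since $T\ge 0$ and $S>0$ (the boundary case $T=0$ forces $x_1=x_2=1$, giving $y=1$ and $q=p_1+p_2$).

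For Property 1, write the inequality $q\le p_1+p_2$ as $y\ge S/(p_1+p_2)$; since $f$ is decreasing this is $f(y)\le f(S/(p_1+p_2))$. Unfolding both sides, this is exactly $\frac{p_1H(x_1)+p_2H(x_2)}{p_1+p_2}\le H\!\left(\frac{p_1x_1+p_2x_2}{p_1+p_2}\right)$, i.e.\ concavity of $H$.

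The crux is Property 2, and this is where Lemma~\ref{L:peri} enters. Dividing the desired inequality by $zS$ (the $z=0$ case is trivial) and using $H(t)=tf(t)$, it becomes
\[
w_1 f(zx_1)+w_2 f(zx_2)\ge f(zy),\qquad w_i:=\tfrac{p_ix_i}{S}.
\]
The weights $w_i$ sum to $1$, and the defining equation $f(y)=T/S$ rewrites as $f(y)=w_1f(x_1)+w_2f(x_2)$, i.e.\ $y=g(w_1f(x_1)+w_2f(x_2))$. Setting $\phi(u)=f(zg(u))$, Lemma~\ref{L:peri} tells us $\phi$ is convex (for $0<z<1$; the case $z=1$ is trivial as then $\phi(u)=u$), so Jensen's inequality applied with weights $w_1,w_2$ at the points $f(x_1),f(x_2)$ yields $w_1\phi(f(x_1))+w_2\phi(f(x_2))\ge\phi(w_1f(x_1)+w_2f(x_2))$, which is exactly the required inequality. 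The main obstacle is to spot the change of variables $u_i=f(x_i)$ and to notice that the entropy equation $p_1H(x_1)+p_2H(x_2)=qH(y)$, which at first looks awkward, is precisely the statement that the weighted average (with weights $w_i=p_ix_i/S$) of $f(x_i)$ equals $f(y)$.

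Property 3 then falls out of two applications of Property 2. First, apply Property 2 with $z=x_1$ (multiplied by $p_1$) and with $z=x_2$ (multiplied by $p_2$) and add: the LHS becomes $p_1^2H(x_1^2)+2p_1p_2H(x_1x_2)+p_2^2H(x_2^2)$ and the RHS becomes $q\bigl(p_1H(x_1y)+p_2H(x_2y)\bigr)$. Next, apply Property 2 once more with $z=y\in(0,1]$ to bound $p_1H(x_1y)+p_2H(x_2y)\ge qH(y^2)$; multiplying by $q$ gives $q^2H(y^2)$, the desired right-hand side (the stated ``$q^2H(y)^2$'' in the lemma appears to be a typographical slip for $q^2H(y^2)$, matching $\E_{q,q}[H(X_1X_2)]$ for the degenerate distribution $q\delta_y+(1-q)\delta_0$).
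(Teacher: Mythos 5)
Your proof is correct and, for the existence claim and Properties 2 and 3, follows essentially the same route as the paper: $y$ is produced from the surjectivity of $f$ applied to the identity $T/S=w_1f(x_1)+w_2f(x_2)$ with $w_i=p_ix_i/S$, Property 2 is Jensen's inequality for the convex function $f(zg(\cdot))$ of Lemma~\ref{L:peri} at the points $f(x_i)$, and Property 3 is three invocations of Property 2 at $z=x_1,x_2,y$. You are also right that the $q^2H(y)^2$ in the statement is a slip for $q^2H(y^2)$, and your explicit handling of the edge cases $z=0$ and $z=1$ (where Lemma~\ref{L:peri} does not literally apply) is a small but genuine tightening.

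The one place you genuinely diverge is Property 1, and there your argument is the better one. The paper deduces $q\le p_1+p_2$ from the observation that $y$ lies in the interval spanned by $x_1$ and $x_2$; as written that is not sufficient, since $q\le p_1+p_2$ is equivalent to $y\ge(p_1x_1+p_2x_2)/(p_1+p_2)$, i.e.\ to $y$ being at least the \emph{weighted average} of $x_1$ and $x_2$, which does not follow from $y$ merely lying between them. Your reduction --- $q\le p_1+p_2$ iff $f(y)\le f(S/(p_1+p_2))$ by monotonicity of $f$, which unfolds to exactly the concavity inequality $\frac{p_1H(x_1)+p_2H(x_2)}{p_1+p_2}\le H\bigl(\frac{p_1x_1+p_2x_2}{p_1+p_2}\bigr)$ --- is complete and closes that gap.
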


\begin{proof}
Since $f:t\to H(t)/t$ is onto, there exists $y$ such that
\begin{align*}
\frac{H(y)}{y}=f(y)
&=\frac{p_1x_1}{p_1x_1+p_2x_2}f(x_1)+\frac{p_2x_2}{p_1x_1+p_2x_2}f(x_2)\\
&=\frac{p_1H(x_1)+p_2h(x_2)}{p_1x_1+p_2x_2}.
\end{align*}

Thus if we set $q=\frac{p_1x_1+p_2x_2}y$, the required equalities clearly hold.

By construction, $\frac{H(y)}{y}$ is contained in the interval spanned by
$\frac{H(x_1)}{x_1}$ and $\frac{H(x_2)}{x_2}$. Since $f$ is monotonic, it follows that
$y$ is contained in the interval spanned by $x_1$ and $x_2$. Hence it follows that $q\le p_1+p_2$.

Fix $0\le z\le 1$. By Lemma~\ref{L:peri}, $f(z g(x))$ is convex. If we write 
$p=\frac{p_1x_1}{p_1x_1+p_2x_2}$ then since $f(y)=pf(x_1)+(1-p)f(x_2)$, it follows that
$f(z g(f(y)))\le pf(zg(f(x_1)))+(1-p)f(\alpha g(f(x_2)))$. Since $g$ and $f$ are inverse 
functions, this can be rewritten as $f(zy)\le pf(zx_1)+(1-p)f(zx_2)$, which gives
\begin{align*}
\frac{H(zy)}{y}&\le\frac{p_1x_1}{p_1x_1+p_2x_2}\frac{H(zx_1)}{x_1}+
\frac{p_2x_2}{p_1x_1+p_2x_2}\frac{H(zx_2)}{x_2}\\
H(zy)&\le p_1x_1\frac{H(zx_1)}{x_1}+p_2x_2\frac{H(zx_2)}{x_2}\\
&=p_1H(zx_1)+p_2H(zx_2).
\end{align*}

Finally the last inequality comes from the second inequality applied at $z=y, x_1$ and $x_2$:
\begin{align*}
q^2H(y^2)&\le q(p_1H(yx_1)+p_2H(yx_2))\\
&=p_1(qH(yx_1))+p_2(qH(yx_2))\\
&\le p_1(p_1H(x_1)^2+p_2H(x_1x_2))+p_2(p_1H(x_1x_2)+p_2H(x_2)^2).
\end{align*}
\end{proof}

This solves the above optimisation problem.
\begin{thm}\label{T:tegan}
For any distribution $p$ on $[0,1]$, there exists a
distribution $q$ on $[0,1]$ with at most one non-zero point in its support, for which
$\E_q(X)=\E_p(X)$, $\E_q(H(X))=\E_p(H(X))$ and $\E_{q,q}(H(X_1X_2))\le\E_{p,p}(H(X_1X_2)$.
\end{thm}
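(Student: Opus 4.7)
The plan is to follow the strategy outlined by the author just before the statement. First I would handle distributions of finite support by induction on the size of the non-zero support, reducing by at least one at each step using Lemma~\ref{L:ace}, and then extend to arbitrary distributions by a compactness and continuity argument. Throughout, I use the fact that an atom at $0$ contributes nothing to any of $\E_p(X)$, $\E_p(H(X))$ or $\E_{p,p}(H(X_1X_2))$, so mass at $0$ can be inserted or removed without disturbing the three quantities of interest.

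For the induction step I would pick any two non-zero atoms $(p_1,x_1)$ and $(p_2,x_2)$ of a distribution $p$ with $k\ge 2$ non-zero atoms (all $x_i\in(0,1]$), feed them into Lemma~\ref{L:ace} to obtain $q,y$, and form a new distribution $p'$ consisting of the atom $(q,y)$, the unchanged atoms $(p_i,x_i)$ for $i\ge 3$, and an additional atom $(p_1+p_2-q,0)$; this last atom has nonnegative mass by property (1). The identities $\E_{p'}(X)=\E_p(X)$ and $\E_{p'}(H(X))=\E_p(H(X))$ then follow directly from the defining equations for $q$ and $y$. For the joint expectation I would split
\[\E_{p,p}(H(X_1X_2))-\E_{p',p'}(H(X_1X_2))\]
according to the position of the two summation indices, obtaining a $\{1,2\}\times\{1,2\}$ block bounded by property (3), cross terms $2p_i\bigl(p_1H(x_1x_i)+p_2H(x_2x_i)-qH(yx_i)\bigr)\ge 0$ for each $i\ge 3$ via property (2) with $z=x_i$, and an $i,j\ge 3$ block that cancels. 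Iterating this reduction at most $k-1$ times produces a distribution with at most one non-zero atom.

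For a general $p$ on $[0,1]$ I would approximate $p$ weakly by a sequence $p_n$ of finitely supported probability measures (standard on the compact interval $[0,1]$). Since $x\mapsto H(x)$ and $(x_1,x_2)\mapsto H(x_1x_2)$ are continuous and bounded, each of the three functionals is continuous under weak convergence, so the values for $p_n$ converge to those for $p$. Applying the finite-support case to each $p_n$ yields a distribution $q_n$ supported on $\{0,y_n\}$ with mass $q_n^*\in[0,1]$ at $y_n\in[0,1]$. By compactness of $[0,1]^2$, I can pass to a subsequence so that $(q_n^*,y_n)\to(q^*,y)$, and the limiting distribution $q$ placing mass $q^*$ at $y$ and $1-q^*$ at $0$ satisfies the two identities by continuity and the required inequality by taking limits of $\le$.

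The main work is just the bookkeeping in the induction step, since all of the analytic heavy lifting has already been absorbed into Lemma~\ref{L:ace}. The one subtlety worth flagging is the use of the padding atom $(p_1+p_2-q,0)$: this is what keeps each reduced distribution an honest probability measure while leaving all three functionals undisturbed, which is what makes the one-step reduction clean.
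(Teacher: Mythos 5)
Your proposal is correct and follows essentially the same route as the paper: induction on the size of the non-zero support via Lemma~\ref{L:ace}, padding with a zero atom of mass $p_1+p_2-q$, and a density-plus-continuity passage to general distributions. In fact you spell out the block decomposition of $\E_{p,p}(H(X_1X_2))-\E_{p',p'}(H(X_1X_2))$ and the compactness argument in more detail than the paper does.
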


\begin{proof}
Suppose first that $p$ is of finite support, but has more than one non-zero point in its support.
Let $x_1, x_2$ be distinct non-zero points in the support and let $p_1=\Pr(X=x_1)$ and $p_2=\Pr(X=x_2)$.

If we let $q$ and $y$ be the numbers described in Lemma~\ref{L:ace}, and make $p'$ be a distribution
which is identical to $p$ but replaces hitting the elements $x_1, x_2$ with probability $p_1, p_2$ by
hitting the elements $y, 0$ with probability $q, p_1+p_2-q$, we still have a probability 
distribution (as $0\le q\le p_1+p_2$), but with smaller support. Further, by the definition of $q$ and
$y$, $\E_p(X)=\E_{p'}(X)$ and $\E_p(H(X))=\E_{p'}(H(X))$. Further, from the inequalities in
Lemma~\ref{L:ace}, $\E_{p',p'}(H(X_1X_2))\le\E_{p,p}(H(X_1X_2)$.

Inductively, for any distribution of finite support we can replace two non-zero elements with one
repeatedly, maintaining $\E_p(X)$ and $\E_p(H(X))$ but never increasing $\E_{p,p}(H(X_1X_2))$.
Eventually we will reach such a distribution $q$. This proves the Theorem for all distributions
of finite support.

The distributions of finite support are dense in the set of all distributions and these expectations
are continuous. As such, we can generalise to all distributions.
\end{proof}

We will rewrite this Theorem using the function $g$ which we defined as the inverse of $f(x)=H(x)/x$.

\begin{cor}\label{C:nyssa}
For real numbers $0<t,u<1$ with $u\le H(t)$, the minimum possible value of 
$\E_{p,p}(H(X_1X_2))$ with $\E_p(X)=t$ and $\E_p(H(x))=u$ is 
$t^2H(v^2)/v^2$ where $v=g(u/t)$.
\end{cor}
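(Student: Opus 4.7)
The plan is to reduce directly to Theorem~\ref{T:tegan}. Given any distribution $p$ with $\E_p(X)=t$ and $\E_p(H(X))=u$, Theorem~\ref{T:tegan} furnishes a distribution $q$ with at most one non-zero point in its support, matching these same two expectations while satisfying $\E_{q,q}(H(X_1X_2))\le\E_{p,p}(H(X_1X_2))$. Hence the infimum of $\E_{p,p}(H(X_1X_2))$ over admissible $p$ is attained within the restricted class of distributions supported on $\{0\}\cup\{v\}$, and it suffices to compute it there.

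First I would observe that $t>0$ forces $q$ to carry positive mass away from $0$, so its support is exactly $\{0,v\}$ for some $v\in(0,1]$, with mass $q$ at $v$ and $1-q$ at $0$. Using $H(0)=0$, the constraints become $qv=t$ and $qH(v)=u$. Dividing the second by the first gives $f(v)=H(v)/v=u/t$, which by Lemma~\ref{L:kamelion} uniquely determines $v=g(u/t)$, and then $q=t/v$.

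Before computing I would check consistency: the requirement $q\le 1$ is equivalent to $v\ge t$, and since $f$ is decreasing this is in turn equivalent to $f(v)\le f(t)$, i.e.~$u/t\le H(t)/t$, which is exactly the hypothesis $u\le H(t)$. So the two-point distribution is well-defined and admissible.

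Finally, with this $q$ the product $X_1X_2$ equals $v^2$ with probability $q^2$ and $0$ otherwise, so
\[\E_{q,q}(H(X_1X_2))=q^2H(v^2)=\frac{t^2}{v^2}H(v^2),\]
which is the value claimed in the corollary. All the real content has been packed into Theorem~\ref{T:tegan}; the remaining work is purely bookkeeping, so I anticipate no serious obstacle beyond verifying the parameter constraint $v\ge t$ as above.
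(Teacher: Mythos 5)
Your proof is correct and follows essentially the same route as the paper: reduce to a distribution with at most one non-zero support point via Theorem~\ref{T:tegan}, pin down that distribution uniquely from the two constraints using the injectivity of $f$, check $q\le 1$ via $u\le H(t)$, and evaluate. The only cosmetic difference is ordering (you derive the extremal two-point distribution from the reduction, whereas the paper first exhibits it as achievable and then shows the reduced $q$ must coincide with it), which changes nothing of substance.
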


\begin{proof}
This is achievable by the distribution which takes $v$ with probability $\frac{t}{v}$ and
0 with probability $1-\frac{t}{v}$, note that $f(v)=u/t\le H(t)/t=f(t)$, so $t<v$ and this is 
a proper distribution.

To show this is optimal, take any distribution $p$. Then we know by Theorem~\ref{T:tegan}
there is a distribution $q$ on $[0,1]$ with at most one non-zero point in its support for which
$\E_q(X)=t$, $\E_q(H(X))=u$ and $\E_{q,q}(H(X_1)X_2))\le\E_{p,p}(H(X_1X_2))$.

Since the expectation of $X$ under $q$ is non-zero, there is a non-zero point in the support of 
$q$. Let it be $v'$ and the probability be $p$.

Then $t=pv'$ and $u=pH(v')$, so $f(v')=u/t=f(v)$ and hence $v=v'$ and $p=\frac{t}{v}$, so
the distribution $q$ is in fact the construction from the first paragraph of this proof.
\end{proof}

\begin{proof}[Proof of Lemma~\ref{L:main2}]
Let us suppose as defined in the lemma, that for $1\le i\le n$, $p_i$ and $w_i\in[0,1]$ are 
numbers such that $\sum_i p_i=1$ and 
$\sum_i p_iw_i\ge\beta$ where $1>\beta\ge\frac{\sqrt{5}-1}2$.

Set $t=\sum_ip_iw_i$. Since $H$ is concave, it follows that $0\le\sum_ip_iH(w_i)\le H(t)$. 

Set $u=\sum_ip_iH(w_i)$ and $v=g(u/t)$. Then from Corollary~\ref{C:nyssa} it follows that
\[\sum_i\sum_j p_ip_jH(w_iw_j)\ge t^2H(v^2)/v^2.\]

Recall that $f(v)=u/t\le H(t)/t=f(t)$ so $t<v$. From Lemma~\ref{L:adric}, 
$\frac{H(v^2)}{vH(v)}\ge\frac{H(t^2)}{tH(t)}$. Note that this is the only place in the proof we use that $\beta\ge\frac{\sqrt{5}-1}2$. 

It follows that
\begin{align*}
\sum_i\sum_j p_iH(w_iw_j)&\ge t^2H(v^2)/v^2\\
&=t\frac{H(v^2)}{vH(v)}\frac{tH(v)}{v}\\
&=tu\frac{H(v^2)}{vH(v)}\\
&\ge tu\frac{H(t^2)}{tH(t)}\\
&=\frac{H(t^2)}{H(t)}\sum_ip_ih(W_i)\\
&\ge\frac{H(\beta^2}{H(\beta)}\sum_i p_iH(w_i),
\end{align*}
the last inequality coming from Lemma~\ref{L:turlough}
\end{proof}

\bibliography{GilmerPaper}
\bibliographystyle{ieeetr}
\end{document}